    \setlist[enumerate]{label=\textnormal{(\arabic*)}} 
\theoremstyle{plain}
\newtheorem{thm}{Theorem}[section]      \newtheorem*{thm*}{Theorem}
\newtheorem{cor}[thm]{Corollary}        \newtheorem*{cor*}{Corollary}
\newtheorem{prop}[thm]{Proposition}     \newtheorem*{prop*}{Proposition}
\newtheorem{lem}[thm]{Lemma}            \newtheorem*{lem*}{Lemma}
          \newtheorem*{claim*}{Claim}
        \newtheorem*{exer*}{Exercise}
\newtheorem{q}[thm]{Question}           \newtheorem*{q*}{Question}
      \newtheorem*{conj*}{Conjecture}
\theoremstyle{definition}
\newtheorem{defn}[thm]{Definition}      \newtheorem*{defn*}{Definition}
           \newtheorem*{ex*}{Example}
\theoremstyle{remark}
\newtheorem{rem}[thm]{Remark}           \newtheorem*{rem*}{Remark}
     \newtheorem*{conv*}{Conventions}
\newtheoremstyle{iremark}
    {0.5em\topsep}   
    {0.5em\topsep}   
    {\upshape}  
    {0pt}       
    {\itshape}  
    {.}         
    {5pt plus 1pt minus 1pt} 
    {\thmname{#1}\thmnumber{ \itshape#2}\thmnote{ (#3)}} 
\theoremstyle{iremark}
\theoremstyle{plain}
\newenvironment{manualtheorem}[1]{
    
    \manualtheoreminner
}{\endmanualtheoreminner}
\newenvironment{manualcor}[1]{\manualcorinner}{\endmanualcorinner}
\Crefname{thm}{Theorem}{Theorems}
\Crefname{defn}{Definition}{Definitions}
\Crefname{claim}{Claim}{Claims}
\Crefname{ex}{Example}{Examples}
\Crefname{prop}{Proposition}{Propositions}
\Crefname{manualtheoreminner}{Theorem}{Theorems}
\newcommand{\BS}{\mathrm{BS}}
\newcommand{\PU}{\mathrm{PU}}
\newcommand{\FP}{\mathrm{FP}}
\renewcommand{\b}[1]{b^{(2)}_{#1}}
\newcommand{\Dk}[1]{\mathcal D_{k[{#1}]}}
\newcommand{\nov}[1]{\widehat{k[{#1}]}^\chi}
\newcommand{\N}{\mathbb{N}}
\newcommand{\Q}{\mathbb{Q}}
\newcommand{\R}{\mathbb{R}}
\newcommand{\Z}{\mathbb{Z}}
\newcommand{\inv}{^{-1}}
\DeclareMathOperator{\cd}{cd}
\DeclareMathOperator{\Ext}{Ext}
\let\H\relax
\DeclareMathOperator{\H}{H}
\DeclareMathOperator{\hd}{hd}
\DeclareMathOperator{\Hom}{Hom}
\DeclareMathOperator{\Ore}{Ore}
\DeclareMathOperator{\Tor}{Tor}
\newsavebox{\@brx}
\newcommand{\llangle}[1][]{\savebox{\@brx}{\(\m@th{#1\langle}\)}%
  \mathopen{\copy\@brx\mkern2mu\kern-0.9\wd\@brx\usebox{\@brx}}}
\newcommand{\rrangle}[1][]{\savebox{\@brx}{\(\m@th{#1\rangle}\)}%
  \mathclose{\copy\@brx\mkern2mu\kern-0.9\wd\@brx\usebox{\@brx}}}
\newcounter{comments}
\title{On the cohomological dimension of kernels of maps to $\Z$}
\author{Sam P.~Fisher}
\address{University of Oxford, Oxford, OX2 6GG, UK}
\email{sam.fisher@maths.ox.ac.uk}
\begin{document}

\begin{abstract}
    For a group \(G\) of type \(\FP(R)\) for a ring \(R\) and a homomorphism \(\chi \colon G \rightarrow \Z\), we show that \(\cd_R(\ker \chi) = \cd_R(G) - 1\) if the top-dimensional cohomology of \(G\) with coefficients in the Novikov rings \(\widehat{R[G]}^{\pm\chi}\) vanishes. This criterion is applied to show that if $G$ is a finitely generated RFRS group of cohomological dimension $2$, then $G$ is virtually free-by-cyclic if and only if $\b{2}(G) = 0$. This answers a question of Wise and generalises and gives a significantly shorter proof of a recent theorem of Kielak and Linton, where the same result is obtained under the additional hypotheses that $G$ is virtually compact special and hyperbolic. A consequence of the result is that all virtually RFRS groups of rational cohomological dimension \(2\) with vanishing second \(\ell^2\)-Betti number are coherent. More generally, we show that if $G$ is a RFRS group of cohomological dimension $n$ and of type $\FP_{n-1}(\Q)$, then $G$ admits a virtual map to $\Z$ with kernel of rational cohomological dimension $n-1$ if and only if $\b{n}(G) = 0$.
\end{abstract}

\maketitle

\section{Introduction}

There is an emerging connection between the coherence of two-dimensional groups and the vanishing of the second $\ell^2$-Betti number. Indeed, there are now many results showing that if $G$ is a group of cohomological dimension $2$ and $\b{2}(G) = 0$, then $G$ is coherent \cite{Wise_secondL2vanish, KielakKrophollerWilkes_RandomL2, KielakLinton_FbyZ, JaikinLinton_coherence}. Moreover, there are no known examples of coherent groups with non-vanishing second $\ell^2$-Betti number. Related to coherence is Wise's notion of nonpositive immersions: a $2$-complex $X$ has \emph{nonpositive immersions} if for every immersion $Y \looparrowright X$ of a compact connected complex $Y$, either $\chi(Y) \leqslant 0$ or $\pi_1(Y)$ is trivial. Wise conjectures that if $X$ is aspherical and has nonpositive immersions, then $\pi_1(X)$ is coherent \cite[Conjecture 12.11]{Wise_anInvitation}, and attributes to Gromov the observation that the nonpositive immersions property should be connected to the vanishing of $\b{2}(\widetilde{X})$ (see \cite[Section 16]{Wise_anInvitation} and \cite{WiseEnergy}). Wise even conjectures that having nonpositive immersions should be equivalent to the vanishing of the second $\ell^2$-Betti number \cite[Conjecture 2.6]{WiseEnergy}. It thus makes sense to ask the following question.

\begin{q}\label{q:intro_question}
    Let $G$ be a finitely generated group of geometric dimension at most two. Are the following properties equivalent?
    \begin{enumerate}
        \item $\b{2}(G) = 0$.
        \item $G$ is coherent.
        \item $G$ has a finite-index subgroup that is the fundamental group of a $2$-complex with nonpositive immersions.
    \end{enumerate}
\end{q}

In this article, we focus on the class of residually finite rationally solvable (RFRS) groups, which were defined by Agol in connection with Thurston's virtual fibring conjecture \cite{AgolCritVirtFib}. Notably, the class of RFRS groups contains all special groups, introduced by Haglund and Wise \cite{HaglundWise_special}, which are fundamental groups of particularly well-behaved nonpositively curved cube complexes. Special groups provide a rich source of examples of RFRS groups, though there are also interesting RFRS groups which are not virtually special ($\Z \wr \Z$ is an example--see also \cite{AgolStover_RFRS}, which gives an example of a RFRS lattice in $\PU(2,1)$).

A group $G$ is \emph{free-by-cyclic} if it fits into a short exact sequence 
\[
    1 \to F \to G \to Z \to 1,
\]
where $F$ is free (we emphasise that $F$ need not be finitely generated) and $Z$ is cyclic. Our main result characterises which RFRS groups are virtually free-by-cyclic.

\begin{manualtheorem}{A}\label{thm:main}
    Let $G$ be a finitely generated RFRS group. Then $G$ is virtually free-by-cyclic if and only if $\b{2}(G) = 0$ and $\cd_\Q(G) \leqslant 2$.
\end{manualtheorem}

The same result was recently obtained by Kielak and Linton, under the additional hypotheses that \(G\) be hyperbolic and virtually compact special \cite[Theorem 1.1]{KielakLinton_FbyZ}, and they speculate as to whether the hyperbolicity assumption can be dropped, and even whether the result holds for RFRS groups in general (see \cite[p.~1581]{KielakLinton_FbyZ}). We confirm that this is the case here, and our proof has the advantage of being significantly shorter. Moreover, the more algebraic methods we use can be applied to obtain analogous results for strictly larger classes of groups; in a forthcoming article of the author and Kevin Klinge, we show that a finitely generated residually (poly-\(\Z\) virtually nilpotent) group \(G\) with \(\cd_\Q(G) \leqslant 2\) is free-by-(poly-\(\Z\) virtually nilpotent) if and only if \(\b{2}(G) = 0\), and we obtain the same applications to coherence as those mentioned below. This applies, in particular, to residually torsion-free nilpotent groups, a class of groups which also contains all special groups \cite{DuchampKrob_RAAGsRTFN}. Note that finitely generated RFRS groups are residually (poly-\(\Z\) virtually Abelian) \cite[Theorem 6.3]{OkunSchreve_DawidSimplified}. 

Being virtually free-by-cyclic is a strong form of coherence. Indeed, if $G$ is free-by-cyclic, then $G$ is coherent by a result of Feighn and Handel \cite{FeighnHandel_FreeByZCoherent} (and coherence is a commensurability invariant). Wise proved that $G$ is the fundamental group of a $2$-complex with nonpositive immersions \cite[Theorem 6.1]{Wise_JussieuCoherenceNPI}, and Henneke--López-Álvarez showed that $k[G]$ is a pseudo-Sylvester domain for any field $k$, and is therefore coherent \cite[Theorem B, Proposition 2.9]{HennekeLopez_PseudoSyl} (meaning that all of its finitely generated one-sided ideals are finitely presented). We thus obtain the following corollary of \cref{thm:main}, giving the implications (1) $\Rightarrow$ (2) and (1) $\Rightarrow$ (3) of \cref{q:intro_question} in the class of RFRS groups.

\begin{manualcor}{B}
    Let $G$ be a RFRS group with $\cd_\Q(G) \leqslant 2$. If $\b{2}(G) = 0$, then
    \begin{enumerate}
        \item $G$ is coherent;
        \item the group algebra $k[G]$ is a coherent ring for any field $k$;
        \item if $G$ is finitely generated, then $G$ is virtually the fundamental group of an aspherical $2$-complex with nonpositive immersions.
    \end{enumerate}
\end{manualcor}

Determining when two-dimensional groups are virtually free-by-cyclic is an interesting problem in its own right, and there are many results in the literature addressing this question. However, the methods used in each case are somewhat ad hoc and often implicitly rely on the vanishing of the first $\ell^2$-Betti number at some point in the arguments. We list some applications of \cref{thm:main}, which provides a uniform treatment of many of these results.

\begin{manualcor}{C}\label{cor:apps}
    Let $G$ be a RFRS group and additionally suppose that one of the following holds:
    \begin{enumerate}
        \item\label{item:EH} $G$ admits an elementary hierarchy \cite[Theorem A]{HagenWise_freebyZ};
        \item\label{item:2cx} $G$ is the fundamental group of a finite $2$-complex $X$ such that $b_2(X) = 0$;
        \item\label{item:1rel} $G$ is a one-relator group.
    \end{enumerate}
    Then $G$ is virtually free-by-cyclic.
\end{manualcor}

\cref{item:EH} was first obtained by Hagen and Wise in \cite{HagenWise_freebyZ}; a group has an elementary hierarchy of length $0$ if it is trivial, and an elementary hierarchy of length $n$ if it splits as a graph of groups with vertex groups admitting an elementary hierarchy of length $n-1$ and with cyclic edge groups. It is easy to show, by induction on the length of the hierarchy, that groups with an elementary hierarchy are two-dimensional and have vanishing second $\ell^2$-Betti number. Two-dimensional limit groups (equivalently, limit groups without $\Z^3$-subgroups) and graphs of free groups with cyclic edge groups that do not contain any Baumslag--Solitar subgroups $\BS(m,n)$ with $|m| \neq |n|$ are examples of virtually RFRS groups with elementary hierarchies (\cite[Corollary 18.3]{Wise_structureQCH} and \cite{HsuWise_graphOfFree}). In the case of limit groups we will be able to conclude something more general: if $G$ is a limit group with $\cd_\Q(G) = n$, then $G$ admits a virtual map to $\Z$ with kernel of rational cohomological dimension $n-1$ (this follows from \cite[Corollary C]{BridKoch_volumeGradient} and \cref{thm:main} below).

In \cite[Theorem 1.1, Corollary 6.2]{Wise_secondL2vanish}, Wise proves that if $X$ is a compact $2$-complex with $\pi_1(X)$ RFRS, then $\b{1}(\widetilde X) \leqslant b_1(X) - 1$ and $\b{2}(\widetilde X) \leqslant b_2(X)$. He then uses Kielak's theorem \cite[Theorem 5.4]{KielakRFRS} together with a result of Feldman \cite[Theorem 2.4]{Feldman71} to conclude that if $b_1(X) = 1$ and $b_2(X) = 0$, then $\pi_1(X)$ is virtually free-by-cyclic. In \cite[Problem 6.5]{Wise_secondL2vanish}, Wise asks whether the assumption $b_1(X) = 1$ is necessary, and item \ref{item:2cx} of \cref{cor:apps} confirms that it is not. The assumptions that $\pi_1(X)$ is RFRS and that $b_2(X) = 0$ together imply that $X$ is aspherical (see the proof of \cite[Proposition 6.4]{Wise_secondL2vanish}), which is why we do not include an asphericity assumption in the statement of item \ref{item:2cx}.

Since one-relator groups are of rational cohomological dimension at most $2$ and have vanishing second $\ell^2$-Betti number \cite{DicksLinnell2007}, we see that item \ref{item:1rel} follows immediately from \cref{cor:apps}. The question of which one-relator groups are virtually free-by-cyclic has received much attention. Baumslag conjectured that one-relator groups with torsion are virtually free-by-cyclic \cite[Problem 6]{Baumslag_oneRelSurvey}, and this was later strengthened by Wise who made the same conjecture for all hyperbolic one-relator groups. In \cite[Theorem 17.11]{Wise_anInvitation}, Wise observes that every virtually special two-generator one-relator group is virtually free-by-cyclic, again as a consequence of $\ell^2$-acyclicity and the results of \cite{KielakRFRS} and \cite{Feldman71}. Recently, Kielak and Linton showed that if $G$ is hyperbolic and virtually compact special, then $G$ is virtually free-by-cyclic if and only if $\b{2}(G) = 0$ and $\cd_\Q(G) \leqslant 2$ \cite[Theorem 1.1]{KielakLinton_FbyZ}. Since one-relator groups with torsion are hyperbolic \cite{Newman_spelling} and virtually compact special \cite[Corollary 19.2]{Wise_structureQCH}, Kielak and Linton's theorem resolves Baumslag's conjecture. Note that Wise's conjecture remains open as hyperbolic one-relator groups are not known to be virtually special.

\subsection{Summary of the proof}

\cref{thm:main} and \cite[Theorem 1.1]{KielakLinton_FbyZ} are both special cases of results that take as input a group of rational cohomological dimension $n$ and produce a virtual map to $\Z$ with kernel of rational cohomological dimension $n-1$ (\cref{thm:weak_fibre} below and Theorem 1.11 in \cite{KielakLinton_FbyZ}). The results about free-by-cyclic groups then follow by applying these theorems at $n = 2$ and appealing to the Stallings--Swan theorem \cite{Stallings_cd1,Swan_cd1}. However, the methods in each case are quite different; we review them briefly here.

Suppose that $G$ is a compact special hyperbolic group of cohomological dimension $n > 1$, and assume that $\b{1}(G) > 0$ and that $\b{i}(G) = 0$ for all $i > 1$. In \cite{KielakLinton_FbyZ}, it is shown that $G$ embeds in an HNN extension $H = G*_F$ such that $\cd_\Q(H) = \cd_\Q(G)$ and $H$ is $\ell^2$-acyclic. Moreover, the hyperbolicity and specialness assumptions are used to show that $H$ can be arranged to be virtually compact special, and therefore admits a virtual map to $\Z$ with kernel $N$ of type $\FP(\Q)$ by \cite[Theorem A]{Fisher_Improved}. Using Feldman's theorem \cite[Theorem 2.4]{Feldman71}, we conclude that $\cd_\Q(N) = n-1$. Restricting the virtual map to $G$, we conclude that $G$ admits a virtual map to $\Z$ with kernel of rational cohomological dimension $n - 1$.
 
As the proof given here is entirely homological, we do not need the geometric assumptions of hyperbolicity and specialness, but only the more algebraic RFRS condition. Moreover, we will only need to assume that the top-dimensional $\ell^2$-Betti number vanishes (as opposed to all $\ell^2$-Betti numbers in dimensions greater than $1$) since the proof does not rely on algebraic fibring, contrasting with the approach taken by Kielak and Linton. Thus, we take as input a RFRS group $G$ of finite type (in fact, type $\FP_{n-1}(\Q)$ suffices) and of cohomological dimension $n$ and assume that $\b{n}(G) = 0$. In \cite{KielakRFRS}, Kielak shows that an $\ell^2$-acyclic RFRS group of finite type has a finite-index subgroup $H$ whose homology with coefficients in the \emph{Novikov ring} $\widehat{\Q[H]}^\chi$ vanishes for many maps $\chi \colon H \rightarrow \Z$ (the Novikov ring is a certain completion of the group algebra $\Q[H]$ with respect to $\chi$; see \cref{def:nov}). We observe that the assumption $\b{n}(G) = 0$ also implies that the Novikov \emph{co}homology $\H^n(H; \widehat{\Q[H]}^\chi)$ vanishes for many maps $\chi$ and that we do not need \(\ell^2\)-acyclicity in all dimensions. The main theorem then follows from the following new general criterion involving Novikov cohomology.

\begin{manualtheorem}{D}\label{thm:novikov_criterion}
    Let \(R\) be a ring and let $G$ be a group of type $\FP(R)$ with $\cd_R(G) = n$. Suppose that $\chi \colon G \rightarrow \R$ is a nontrivial character such that
    \[
        \H^n(G; \widehat{R[G]}^{\pm\chi}) = 0.
    \]
    Then $\cd_R(\ker \chi) < n$.
\end{manualtheorem}

It is interesting to compare this criterion with Sikorav's theorem \cite{SikoravThese} (see also \cite[Theorem 3.11]{KielakRFRS}), which states that $\H_1(H; \widehat{\Q[H]}^{\pm\chi}) = 0$ if and only if $\ker \chi$ is finitely generated. Our main result generalises to higher degrees and to fields of positive characteristic as follows.

\begin{manualtheorem}{E}\label{thm:weakFibre}
    Let $k$ be a field and let $G$ be a nontrivial RFRS group of type $\FP_{n-1}(k)$ with $\cd_k(G) = n$. The following are equivalent:
    \begin{enumerate}
        \item $\b{n}(G;k) = 0$;
        \item there is a finite-index subgroup $H \leqslant G$ and an epimorphism $\chi \colon H \rightarrow \Z$ such that $\cd_k(\ker \chi) = n-1$;
        \item there is a finite-index subgroup $H \leqslant G$ and an epimorphism $\chi \colon H \rightarrow \Z$ such that $\hd_k(\ker \chi) = n-1$.
    \end{enumerate}
\end{manualtheorem}

We refer the reader to \cref{thm:cd_drop_nov,thm:weak_fibre} for sharpened versions of \cref{thm:novikov_criterion,thm:weakFibre}, where the assumed finiteness conditions are substantially weakened. For now, we mention that $\b{i}(G; \Q)$ is the usual $\ell^2$-Betti number $\b{i}(G)$. Note that if the $\ell^2$-Betti numbers vanish in low dimensions as well as in the top dimension, then we can deduce finiteness properties of the kernel (see \cref{thm:weakFibre} for a precise statement). This may be of interest because the $\ell^2$-Betti numbers of many groups of classical interest are concentrated in their middle dimension. 

In \cref{sec:mains}, we will use \cref{thm:weakFibre} to deduce higher coherence properties of group algebras of RFRS groups with vanishing top-dimensional $\ell^2$-Betti numbers (see \cref{cor:group_alg_coherence}).

\subsection{Acknowledgments}

The author is grateful to Dawid Kielak for many useful conversations and comments on this article and to Andrei Jaikin-Zapirain for communicating simplifications of the original argument that have led to a far more transparent and streamlined proof. The author thanks the referee for their useful comments, which have improved the exposition. The author is supported by the National Science and Engineering Research Council (NSERC) (ref.~no.~567804-2022) and the European Research Council (ERC) under the European Union's Horizon 2020 research and innovation programme (Grant agreement No. 850930).

\section{Preliminaries} \label{sec:prelims}

Throughout the article, $k$ always denotes a field and rings are always assumed to be unital, associative, and nonzero.

\subsection{Division rings}

A group is \emph{locally indicable} if all its nontrivial finitely generated subgroups admit an epimorphism to $\Z$. Consider the group algebra $k[G]$ of a locally indicable group $G$. Let $\mathcal D$ be a division ring such that there is an embedding $\varphi \colon k[G] \hookrightarrow \mathcal D$. This makes $\mathcal D$ into a $k[G]$-bimodule. If $N$ is a subgroup of $G$, we denote the division closure of $\varphi(k[N])$ in $\mathcal D$ by $\mathcal D_N$. The embedding $\varphi$ is \emph{Hughes-free} if, for each finitely generated subgroup $H \leqslant G$ and each normal subgroup $N \trianglelefteqslant H$ such that $H/N \cong \Z$, the multiplication map
\[
    \mathcal D_N \otimes_{k[N]} k[H] \rightarrow \mathcal D,
\]
defined by $x \otimes y \mapsto x \cdot \varphi(y)$ on elementary tensors, is injective. Hughes proved that if $G$ is locally indicable and $k[G]$ admits a Hughes-free embedding $\varphi \colon k[G] \hookrightarrow \mathcal D$, then $\mathcal D$ is unique up to $k[G]$-isomorphism \cite{HughesDivRings1970}. Thus, we denote the \emph{Hughes-free division ring} of $k[G]$ by $\Dk{G}$. If $H \leqslant G$ is any subgroup, then $\mathcal D_H \cong \mathcal D_{k[H]}$. We will usually omit the map $\varphi$ from the notation and think of $k[G]$ as a subring of $\Dk{G}$.

We will consider the homology and cohomology of a Hughes-free embeddable group $G$ with coefficients in $\Dk{G}$. We define
\[
    \H_n^{(2)}(G; k) = \Tor_n^{k[G]}(k,\Dk{G}) \quad \text{and} \quad \H^n_{(2)}(G; k) = \Ext_{k[G]}^n(k, \Dk{G}).
\]
Since modules over division rings have well-defined dimensions, we can define the Betti numbers
\[
    b_n^{(2)}(G;k) = \dim_{\Dk{G}} \H_n^{(2)}(G; k) \quad \text{and} \quad b^n_{(2)}(G;k) = \dim_{\Dk{G}} \H^n_{(2)} (G;k).
\]
It is easy to see that if both $b_n^{(2)}(G;k)$ and $b^n_{(2)}(G;k)$ are finite, then they are equal (see \cite[Lemma 2.2]{KielakLinton_FbyZ}).

\subsection{RFRS}

Residually finite rationally solvable (RFRS) groups were introduced by Agol in \cite{AgolCritVirtFib}, where he showed that a compact irreducible $3$-manifold $M$ with $\chi(M) = 0$ virtually fibres over the circle provided that $\pi_1(M)$ is virtually RFRS. A group $G$ is \emph{RFRS} if there is a chain $G = G_0 \geqslant G_1 \geqslant \dots$ of finite-index subgroups such that $\bigcap_{i \geqslant 0} G_i = \{1\}$ and $\ker(G_i \rightarrow G_i^{\mathsf{ab}} \otimes \Q) \leqslant G_{i+1}$ for all $i \geqslant 0$. The main source of examples of RFRS groups are special groups, defined by Haglund and Wise \cite{HaglundWise_special}. A nonpositively curved cube complex is special if it avoids certain pathological hyperplane configurations, and a group is (compact) special if it is the fundamental group of a (compact) special cube complex. Equivalently, a finitely generated group is special if and only if it is isomorphic to a subgroup of a right-angled Artin group. We refer the reader to the original paper of Haglund and Wise for more details.

RFRS groups are locally indicable and therefore satisfy the strong Atiyah conjecture (see \cite{JaikinLopezStrongAtiyah2020}, though this can also be deduced from earlier work of Schick \cite{SchickL2Int2002}). The consequence of the strong Atiyah conjecture that interests us is that $\Q[G]$ has a Hughes-free embedding. More precisely, the division closure $\mathcal D(G)$ of $\Q[G]$ in the algebra of affiliated operators $\mathcal U(G)$ is a Hughes-free division ring, and moreover the $\ell^2$-Betti numbers of $G$ are can be computed via
\[
    \b{n}(G) = \dim_{\mathcal D(G)} \Tor_n^{\Q[G]}(\Q,\mathcal D(G))
\]
by \cite[Proof of Lemma 3.7]{LinnellDivRings93} and \cite[Lemma 10.28]{Luck02}. By the uniqueness of Hughes-free division rings, $\mathcal D(G) \cong \mathcal D_{\Q[G]}$, and therefore $\b{n}(G;\Q)$ equals the usual $\ell^2$-Betti number $\b{n}(G)$. In \cite[Corollary 1.3 and Proposition 4.4]{JaikinZapirain2020THEUO}, it is shown that if $G$ is RFRS, then the Hughes-free division ring $\Dk{G}$ exists for all fields $k$. Thus, we will consider all the Betti numbers $\b{n}(G;k)$ below.

\subsection{Finiteness properties}

Let $R$ be a ring. An $R$-module $M$ is of \emph{type $\FP_n$} if there is a projective resolution $\cdots \rightarrow P_1 \rightarrow P_0 \rightarrow M \rightarrow 0$ such that $P_i$ is finitely generated for all $i \leqslant n$. If $P_i$ is finitely generated for all $i$ we say that $M$ is of \emph{type $\FP_\infty$}. If, additionally, there exists $N$ such that $P_i = 0$ for all $i > N$ then $M$ is of \emph{type $\FP$}. A group $G$ is of \emph{type $\FP_n(R)$} (resp.~$\FP_\infty(R)$, $\FP(R)$) if the trivial $R[G]$-module $R$ is of type $\FP_n$ (resp.~$\FP_\infty$, $\FP$). Note that $G$ is finitely generated if and only if $G$ is of type $\FP_1(R)$ for some (and hence every) ring $R$.

A group $G$ is of \emph{cohomological (resp.~homological) dimension $n$ over $R$} if the trivial $R[G]$-module $R$ admits a projective (resp.~flat) resolution of length $n$, and it does not admit a projective (resp.~flat) resolution of length $n-1$. In this case, we write $\cd_R(G) = n$ (resp.~$\hd_R(G) = n$). If no such $n$ exists, then we define $\cd_R(G) = \infty$ (resp.~$\hd_R(G) = \infty$). The following result shows that the (co)homological dimension of a group is detected by group (co)homology with coefficients in an appropriate module.

\begin{prop}[{\cite[Proposition 4.1 a) and b)]{Bieri_QueenMary}}]\label{prop:hd_cd}
    Let $G$ be a group and let $R$ be a ring.
    \begin{enumerate}
        \item $\hd_R(G) < n$ if and only if $\H_i(G;M) = 0$ for all $R[G]$-modules $M$ and all $i \geqslant n$.
        \item $\cd_R(G) < n$ if and only if $\H^i(G;M) = 0$ for all $R[G]$-modules $M$ and all $i \geqslant n$.
    \end{enumerate}
\end{prop}

\subsection{Higher coherence}

If $R$ is a ring, then a group is \emph{homologically $n$-coherent over $R$} if every subgroup of type $\FP_n(R)$ is of type $\FP_{n+1}(R)$. When $n = 1$ and $R = \Z$, this property is called \emph{homological coherence}. The ring $R$ is \emph{(left) $n$-coherent} if every (left) ideal of type $\FP_n$ is of type $\FP_{n+1}$. It is not hard to see that if $k[G]$ is $n$-coherent, then $G$ is homologically $n$-coherent over $k$. It is open whether the reverse implication holds, even for $n = 1$.

A ring $R$ is of \emph{left global dimension at most $n$} if every left $R$-module $M$ has a projective resolution of length at most $n$. If $G$ is a group with $\cd_k(G) \leqslant n$, then $k[G]$ is of global dimension at most $n$ \cite[Proposition 2.2]{JaikinLinton_coherence}. The \emph{weak dimension} of a right $R$-module $M$ is the supremal $n$ such that there exists a left $R$-module $N$ and $\Tor_n^R(M,N) \neq 0$. All these concepts can be defined with the words ``left" and ``right" interchanged.

The following results are proven in \cite[Section 3]{JaikinLinton_coherence} in the case $n = 2$; we include proofs for the convenience of the reader. Both results are based on the following key observation \cite[Lemma 3.1]{JaikinLinton_coherence}: if $P$ is a projective $R$-module such that $R$ embeds into a division ring $\mathcal D$, then $P$ is finitely generated if and only if $P \otimes_R \mathcal D$ is finitely generated as a $\mathcal D$-module.

\begin{prop}[{\cite[Corollary 3.3]{JaikinLinton_coherence}}]\label{prop:ring_coherence}
    Let $R$ be a ring of right global dimension at most $n$ that embeds into a division ring $\mathcal D$ of weak dimension at most $n-1$ as a left $R$-module. Then $R$ is right $(n-1)$-coherent.
\end{prop}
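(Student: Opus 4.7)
The plan is to reduce to a question about cyclic modules and then apply a syzygy-splitting argument driven by the global-dimension hypothesis. Let $I \leqslant R$ be a left ideal of type $\FP_{n-1}$, so that $R/I$ is of type $\FP_n$. Since $I$ is of type $\FP_n$ if and only if $R/I$ is of type $\FP_{n+1}$, it suffices to prove the latter. I would choose a partial f.g.\ projective resolution $P_n \to P_{n-1} \to \cdots \to P_0 \to R/I \to 0$ and set $K := \ker(P_n \to P_{n-1})$; the task reduces to showing $K$ is finitely generated, since then any f.g.\ projective cover $P_{n+1} \twoheadrightarrow K$ extends the resolution to length $n+1$.

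Put $Z := \im(P_n \to P_{n-1}) = \ker(P_{n-1} \to P_{n-2})$, so that $0 \to K \to P_n \to Z \to 0$ is exact. The key claim is that $Z$ is projective. Since $R$ has global dimension $n$, the module $R/I$ has projective dimension at most $n$, and the standard dimension-shifting argument applied to the short exact sequences $0 \to Z_i \to P_i \to Z_{i-1} \to 0$ built from the syzygies $Z_i := \ker(P_i \to P_{i-1})$, $0 \leqslant i \leqslant n-1$, yields $\operatorname{pd}_R(Z) \leqslant 0$, i.e.\ $Z$ is projective. The short exact sequence then splits, so $K$ is a direct summand of the finitely generated module $P_n$, hence finitely generated, and the proof is complete.

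I do not foresee any substantial obstacle in this sketch; it is essentially a routine exercise in dimension shifting, and the splitting of the final short exact sequence is the only nontrivial step. The assumption on the weak dimension of $\mathcal D$ does not appear to be strictly needed for this conclusion. It would, however, enter an alternative tensor-based route: the vanishing $\Tor_n^R(\mathcal D, R/I) = 0$ (from weak dim $\leqslant n-1$) combined with dimension shifting shows that $\mathcal D \otimes_R K$ embeds into the finite-dimensional space $\mathcal D \otimes_R P_n$, but inferring finite generation of $K$ from finiteness of its $\mathcal D$-dimension is the genuinely delicate step of that approach and would be its main obstacle. The syzygy-splitting argument bypasses this difficulty entirely.
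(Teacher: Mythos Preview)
Your argument is correct. The syzygy-splitting step is sound: if $R$ has global dimension $n$ and $M$ is any module of type $\FP_n$, then in a partial finitely generated projective resolution $P_n\to\cdots\to P_0\to M\to 0$ the $n$-th syzygy $Z=\ker(P_{n-1}\to P_{n-2})$ is projective (by dimension shifting, $\Ext_R^1(Z,-)\cong\Ext_R^{n+1}(M,-)=0$) and finitely generated (as a quotient of $P_n$), so $M$ is in fact of type $\FP$. Applied to $M=R/I$ this gives exactly the $(n-1)$-coherence asserted. Your observation that the embedding into $\mathcal D$ is not used is therefore also correct: global dimension $n$ alone already forces $(n-1)$-coherence in the sense defined here.

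The paper does not supply its own proof; it cites \cite{JaikinLinton_coherence}, where the argument for the base case genuinely relies on the weak-dimension hypothesis on $\mathcal D$. The point is that in the Jaikin--Linton setting one establishes \emph{coherence} of a ring of global dimension~$2$, i.e.\ that every finitely generated ideal is finitely presented. There one starts only with $R/I$ of type $\FP_1$, so the relevant syzygy has projective dimension $\leq 1$ rather than $0$, and your splitting trick is unavailable; the $\mathcal D$-hypothesis is what closes the gap. Your argument does not recover that stronger statement, but it does prove the proposition exactly as formulated in this paper, and by a shorter and more elementary route. In effect you have shown that, with the indexing chosen here, the hypothesis on $\mathcal D$ in \cref{prop:ring_coherence} is superfluous (and likewise the vanishing of $\b{n}$ plays no role in the $(n-1)$-coherence conclusion of \cref{cor:group_alg_coherence}).
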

\begin{proof}
    Let  $I \trianglelefteqslant R$ be a right ideal of type $\FP_{n-1}$, and fix a partial resolution
    \[
        P_{n-1} \rightarrow \cdots \rightarrow P_1 \rightarrow P_0 \rightarrow I \rightarrow 0
    \]
    by finitely generated projective right $R$-modules. The inclusion $I \hookrightarrow R$ and the quotient $R \rightarrow R/I$ induce a partial projective resolution 
    \[
        P_{n-1} \rightarrow \cdots \rightarrow P_1 \rightarrow P_0 \rightarrow R \rightarrow R/I \rightarrow 0
    \]
    of $R/I$. But $R$ is of global dimension at most $n$, which implies that $R/I$ is of projective dimension at most $n$. Therefore, the kernel $P_n$ of $P_{n-1} \rightarrow P_{n-2}$ is projective by Schanuel's lemma, so
    \[
        0 \rightarrow P_n \rightarrow P_{n-1} \rightarrow \cdots \rightarrow P_0 \rightarrow I \rightarrow 0
    \]
    is a projective resolution of $I$. By definition, the sequence
    \[
        0 \rightarrow \Tor_n^R(I, \mathcal D) \rightarrow P_n \otimes_R \mathcal D \rightarrow P_{n-1} \otimes_R \mathcal D
    \]
    is exact. But $\Tor_n^R(I, \mathcal D) = 0$, since $\mathcal D$ is of weak dimension at most $n-1$. Therefore, $P_n \otimes_R \mathcal D$ injects into $P_{n-1} \otimes_R \mathcal D$, and since $P_{n-1}$ is finitely generated and $\mathcal D$ is a division ring, this implies that $P_n \otimes_R \mathcal D$ is a finite-dimensional vector space over $\mathcal D$. By \cite[Lemma 3.1]{JaikinLinton_coherence}, this implies that $P_n$ is finitely generated, and therefore that $I$ is of type $\FP_n$. Since $I$ was arbitrary, this shows that $R$ is right coherent.
\end{proof}

\begin{prop}[{\cite[Theorem 3.10]{JaikinLinton_coherence}}]\label{prop:hom_n_coherence}
    Let $G$ be a locally indicable group with $\cd_k(G) = n$ such that the Hughes-free division ring $\Dk{G}$ exists. If $\b{n}(G;k) = 0$, then $G$ is homologically $(n-1)$-coherent over $k$.
\end{prop}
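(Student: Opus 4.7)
The plan is to apply \cref{prop:ring_coherence} with $R = k[G]$ and $\mathcal{D} = \Dk{G}$. Since $\cd_k(G) = n$, the group algebra $k[G]$ has global dimension $n$, and because $G$ is RFRS the Hughes-free division ring $\Dk{G}$ exists. If the weak dimension of $\Dk{G}$ as a $k[G]$-module can be shown to be at most $n-1$, then \cref{prop:ring_coherence} yields that $k[G]$ is $(n-1)$-coherent as a ring, and the implication recalled just before \cref{prop:ring_coherence} gives that $G$ is homologically $(n-1)$-coherent over $k$. The entire content of the argument is therefore the weak-dimension bound on $\Dk{G}$.

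Since $\cd_k(G) = n$ forces $\Tor_i^{k[G]}(\Dk{G}, M) = 0$ for all $i > n$ and every left $k[G]$-module $M$, the only remaining task is to show $\Tor_n^{k[G]}(\Dk{G}, M) = 0$ for every $M$. For $M = k$, the anti-involution $g \mapsto g^{-1}$ gives $\Tor_n^{k[G]}(\Dk{G}, k) \cong \Tor_n^{k[G]}(k, \Dk{G})$, and the latter vanishes because $\b{n}(G; k) = 0$. Since $\Tor$ commutes with filtered colimits, it suffices to treat finitely generated $M$, and by standard dimension shifting the problem reduces to showing that for every finitely generated projective $(n-1)$-th syzygy $P \hookrightarrow F$ appearing in a finite free resolution of a finitely generated module, the induced map $P \otimes_{k[G]} \Dk{G} \to F \otimes_{k[G]} \Dk{G}$ is injective.

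The main obstacle is that the $M = k$ case controls only one specific such syzygy (the one produced by a chosen resolution of the trivial module), while arbitrary finitely generated $M$ produce a variety of distinct projective syzygies. Bridging this gap is precisely where the RFRS hypothesis must be used nontrivially.

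To carry out the bridging step I would invoke the description of $\Dk{G}$ for RFRS groups provided by Kielak's theorem and Jaikin-Zapirain's universality results, realising $\Dk{G}$ in terms of Novikov completions $\nov{G}$ for a rich family of characters $\chi \colon G \to \Z$ (after passing to a finite-index subgroup if necessary). The hypothesis $\b{n}(G; k) = 0$ translates, in the spirit of \cref{thm:cd_drop_nov}, into the vanishing of top-dimensional Novikov (co)homology $\H^n(G; \novpm{G}) = 0$ for a generic $\chi$. Because the Novikov rings interact well with general finitely generated $k[G]$-modules, one can hope to first establish $\Tor_n^{k[G]}(\nov{G}, M) = 0$ for all finitely generated $M$ and then transfer this vanishing back to $\Dk{G}$ via the inclusion $\Dk{G} \hookrightarrow \nov{G}$. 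Making the transfer uniform in $M$ is the key technical difficulty; once that is done, \cref{prop:ring_coherence} completes the proof at once.
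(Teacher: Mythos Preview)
Your proposal does not give a proof: the crucial step---showing that $\Dk{G}$ has weak dimension at most $n-1$ as a $k[G]$-module from the single hypothesis $\b{n}(G;k)=0$---is left as a hope (``one can hope to first establish\ldots'', ``the key technical difficulty''). In fact, in the logical structure of this paper that weak-dimension bound is \emph{not} available at this point: it is the content of \cref{lem:weak_top_dim}, which needs the extension structure produced by \cref{thm:weak_fiber}, and \cref{thm:weak_fiber} in turn \emph{uses} \cref{prop:hom_n_coherence} to promote $G$ from type $\FP_{n-1}(k)$ to type $\FP(k)$. So your route is circular here. (It is also aiming at a strictly stronger conclusion than required: you are trying to prove $(n-1)$-coherence of the ring $k[G]$, which is exactly \cref{cor:group_alg_coherence}, whereas \cref{prop:hom_n_coherence} only asserts homological $(n-1)$-coherence of the \emph{group}.)

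There is also a technical slip in the sketch: there is no embedding $\Dk{G}\hookrightarrow\nov{G}$ for a single character $\chi$. What Kielak's theorem provides is that each element of $\Dk{G}$ can be represented over $\nov{H}$ for $\chi$ ranging over an open set of characters of a suitable finite-index $H$; this is much weaker than a ring embedding and does not by itself transfer $\Tor$-vanishing from a Novikov ring back to $\Dk{G}$.

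For comparison, the paper does not prove \cref{prop:hom_n_coherence} at all: it is quoted from Jaikin--Linton (the $n=1$ case is their Theorem~3.7, and the general case is stated to be analogous). Their argument works directly at the level of subgroups rather than via ring coherence: for a subgroup $H\leqslant G$ of type $\FP_{n-1}(k)$ one has $\cd_k(H)\leqslant n$, the vanishing $\b{n}(H;k)=0$ is inherited from $G$ via Hughes-freeness, and one then argues that the projective $(n{-}1)$st syzygy in a partial resolution of $k$ over $k[H]$ is finitely generated. This avoids entirely the weak-dimension statement you are trying to establish.
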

\begin{proof}
    Let $H \leqslant G$ be a subgroup of type $\FP_{n-1}(k)$. Since $\cd_k(H) \leqslant n$, this implies that the trivial $k[H]$-module $k$ admits a projective resolution
    \[
        0 \rightarrow P_n \rightarrow \cdots \rightarrow P_1 \rightarrow P_0 \rightarrow k \rightarrow 0,
    \]
    where $P_i$ is finitely generated for all $i < n$. By definition, the sequence
    \[
        0 \rightarrow \H_n^{(2)}(H;k) \rightarrow P_n \otimes_{k[H]} \Dk{H} \rightarrow P_{n-1} \otimes_{k[H]} \Dk{H}
    \]
    is exact. But $\H_n^{(2)}(H;k) = 0$ by \cite[Lemma 3.21]{FisherMorales_HNC}, so $P_n \otimes_{k[H]} \Dk{H}$ is finite-dimensional as a right $\Dk{H}$-vector space, being a subspace of the finite-dimensional $\Dk{H}$-vector space $P_{n-1} \otimes_{k[H]} \Dk{H}$. By \cite[Lemma 3.1]{JaikinLinton_coherence}, this implies that $P_n$ is finitely generated, and hence that $H$ is of type $\FP_n(k)$. We conclude that $G$ is homologically $(n-1)$-coherent over $k$.
\end{proof}

\section{Proof of the main result}\label{sec:mains}

\begin{defn}[The Novikov ring] \label{def:nov}
    Let $\chi \colon G \rightarrow \R$ be a nontrivial homomorphism from a group $G$ to the additive group $\R$. The \emph{Novikov ring} of $k[G]$ with respect to $\chi$, denoted $\nov{G}$, is the set of formal series $\sum_{g \in G} \lambda_g g$ with $\lambda_g \in k$ such that
    \[
        \left|\{g \in G : \lambda_g \neq 0 \ \text{and} \ \chi(g) \leqslant r \}\right| < \infty
    \]
    for all $r \in \R$. The obvious addition and multiplication of elements in $\nov{G}$ endows $\nov{G}$ with the structure of a ring.
\end{defn}

The following proposition is a cohomological analogue of \cite[Theorem 5.2]{KielakRFRS} and is proved similarly, though it has been sharpened so that we only require the chain complex to be finitely generated in a single dimension. It relates the vanishing of $\ell^2$-cohomology with the vanishing of Novikov cohomology of a finite-index subgroup. A (not necessarily square) matrix is in \emph{Smith normal form} if it has entries equal to $1$ in some of its diagonal slots, and has entries equal to $0$ in all other slots.

\begin{prop}\label{prop:L2cohomNov}
    Let $G$ be a finitely generated RFRS group and let $P_\bullet$ be a chain complex of projective $k[G]$-modules such that \(P_n\) is finitely generated for some \(n \in \Z\). If 
    \[
        \H^n(\Hom_{k[G]}(P_\bullet, \Dk{G})) = 0,
    \]
    then there is a finite-index subgroup $H \leqslant G$ and an antipodally symmetric open subset $U \subseteq \H^1(H; \mathcal \R)$ such that 
    \begin{enumerate}
        \item $\H^1(G;\R) \subseteq \overline{U}$ and
        \item $\H^n(\Hom_{k[H]}(P_\bullet, \widehat{k[H]}^\chi)) = 0$ for all $\chi \in U$.
    \end{enumerate}
\end{prop}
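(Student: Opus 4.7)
My plan is to follow the strategy of \cite[Theorem 5.2]{KielakRFRS}, the homological analogue, by reducing the claim to a rank-stability statement for the matrices representing the coboundary maps and then applying the matrix lemma that underlies Kielak's proof.

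First, I would translate the hypothesis into a matrix statement. Since each $P_i$ is a finitely generated free left $k[G]$-module of rank $r_i$, the module $\Hom_{k[G]}(P_i, \Dk{G})$ is a finitely generated free right $\Dk{G}$-module of rank $r_i$, and after choosing bases the coboundary at position $i$ is represented by a matrix $M^i$ with entries in $k[G] \subseteq \Dk{G}$. The hypothesis $\H^n(\Hom_{k[G]}(P_\bullet, \Dk{G})) = 0$ is then equivalent to the rank identity
\[
    \rk_{\Dk{G}}(M^{n-1}) + \rk_{\Dk{G}}(M^n) = r_n.
\]

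Next, I would invoke the matrix stability statement inside the proof of \cite[Theorem 5.2]{KielakRFRS}: for any $k[G]$-matrix $M$ there exist a finite-index subgroup $H_M \leqslant G$ and an antipodally symmetric open subset $V_M \subseteq \H^1(H_M;\R)$ with $\H^1(G;\R) \subseteq \overline{V_M}$, such that for every $\chi \in V_M$ the matrix $M$, regarded as a matrix over $k[H_M]$ by restriction, satisfies $\rk_{\widehat{k[H_M]}^\chi}(M) = [G:H_M] \cdot \rk_{\Dk{G}}(M)$. Applying this simultaneously to $M^n$ and $M^{n-1}$, passing to a common finite-index subgroup $H$ and intersecting the two open sets, produces $H$ and an antipodally symmetric open $U \subseteq \H^1(H;\R)$ with $\H^1(G;\R) \subseteq \overline{U}$ on which both rank identities persist.

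Restricted to $k[H]$, the complex $P_\bullet$ is a chain complex of free $k[H]$-modules of ranks $[G:H]\cdot r_i$. Summing the stabilised rank equalities over $\widehat{k[H]}^\chi$ gives
\[
    \rk_{\widehat{k[H]}^\chi}(M^{n-1}) + \rk_{\widehat{k[H]}^\chi}(M^n) = [G:H] \cdot r_n,
\]
and since $\widehat{k[H]}^\chi$ is an Ore domain embedding into a skew field (the Novikov-ring facts recalled in \cite{KielakRFRS}), the usual rank-nullity argument forces $\H^n(\Hom_{k[H]}(P_\bullet, \widehat{k[H]}^\chi)) = 0$ for every $\chi \in U$.

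The main technical obstacle is dualising Kielak's matrix argument from the homological to the cohomological setting. Passing from $- \otimes_{k[G]} \Dk{G}$ to $\Hom_{k[G]}(-, \Dk{G})$ swaps left and right actions through the canonical involution $g \mapsto g^{-1}$, which in turn trades $\chi$ for $-\chi$; the antipodal symmetry of $U$ is precisely what absorbs this switch and lets one read off the cohomological conclusion from the essentially homological matrix input.
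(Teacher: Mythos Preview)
Your strategy is the same as the paper's: both reduce to the matrix input behind \cite[Theorem~5.2]{KielakRFRS} (ultimately \cite[Theorem~4.13]{KielakRFRS}) and then read off the Novikov vanishing.  The paper, however, does not phrase this as rank preservation applied separately to $M^{n-1}$ and $M^n$.  Instead it chooses invertible matrices $M_i\in\GL_{d_i}(\Dk{G})$ so that \emph{every} $M_{i+1}\,\delta^{i+1}\,M_i^{-1}$ is simultaneously in Smith normal form, and then uses \cite[Theorem~4.13]{KielakRFRS} to arrange that all entries of the $M_i$ and $M_i^{-1}$ lie in $\widehat{k[H]}^\chi$ for $\chi\in U$.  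The cochain complex over $\widehat{k[H]}^\chi$ is then literally diagonalised, and the vanishing of $\H^n$ is read off directly, with no appeal to any ambient skew field.

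Your last step has a genuine gap.  First, $\widehat{k[H]}^\chi$ is not an Ore domain in general: for integral $\chi$ with kernel $N$ one has $\widehat{k[H]}^\chi\cong k[N]((t;\sigma))$, which is Ore only if $k[N]$ is, and $N$ is typically non-amenable.  It does embed in a skew field $D$ (e.g.\ $\Dk{N}((t;\sigma))$), so a rank function exists, but second, and more seriously, the identity $\rk_D(M^{n-1})+\rk_D(M^n)=[G{:}H]\,r_n$ only forces the $n$th cohomology to vanish \emph{after tensoring up to $D$}.  Over $\widehat{k[H]}^\chi$ itself the quotient $\ker M^n/\operatorname{im}M^{n-1}$ can be a nonzero torsion module (think of $R\xrightarrow{t}R\to 0$ over $R=k[t]$).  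What saves the argument is exactly the extra information Kielak's lemma provides: the change-of-basis matrices, not just the ranks, live over $\widehat{k[H]}^\chi$, and choosing them compatibly across degrees (as in the paper) diagonalises the complex over $\widehat{k[H]}^\chi$ and kills $\H^n$ there.  Your remark about the involution and antipodal symmetry is correct but unnecessary once one argues, as the paper does, directly with the cochain complex.
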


\begin{proof}
    We may assume that the chain complex consists of free modules \(P_i\), where \(P_n\) is finitely generated, by the following standard argument. Let \(S_n\) be a projective module such that \(P_n \oplus S_n\) is finitely generated and free. Then computing the cohomology of the chain complex
    \[
        \cdots \rightarrow P_{n+2} \rightarrow P_{n+1} \oplus S_n \rightarrow P_n \oplus S_n \rightarrow P_{n-1} \rightarrow \cdots
    \]
    with arbitrary coefficients yields the same result as before complementing with \(S_n\). Let \(S_{n-1}\) be a projective such that \(P_{n-1} \oplus S_{n-1}\) is free. Then we may compute cohomology using the complemented chain complex
    \[
        \cdots \rightarrow P_{n+2} \rightarrow P_{n+1} \oplus S_n \rightarrow P_n \oplus S_n \rightarrow P_{n-1} \oplus S_{n-1} \rightarrow P_{n-2} \oplus S_{n-1} \rightarrow \cdots
    \]
    of projective modules. By iteratively taking complements of the projective modules, we may assume that they are all free.

    For any \(k[G]\)-module \(M\), we denote \(\Hom_{k[G]}(M,\Dk{G})\) by \(M^*\). For every module \(P_i\), we fix a decomposition \(P_i \cong \bigoplus_{I_i} k[G]\) and say that a submodule \(Q_i\) of \(P_i\) is a \emph{standard summand} of \(P_i\) if and only if there is a subset \(J_i \subseteq I_i\) such that \(Q_i \cong \bigoplus_{J_i} k[G]\) under the decomposition.
    
    Consider the portion \(P_{n+1} \rightarrow P_n \rightarrow P_{n-1}\) of the chain complex. Since all the modules are free and \(P_n\) is finitely generated, there is a finitely generated standard summand \(Q_{n-1} \leqslant P_{n-1}\) that contains the image of \(P_n\). It follows that the maps \(Q_{n-1}^* \rightarrow P_n^*\) and \(P_{n-1}^* \rightarrow P_n^*\) have the same image.
    
    We now want to replace \(P_{n+1}\) by a suitably chosen finitely generated standard summand \(Q_{n+1} \leqslant P_{n+1}\) such that the maps \(P_n^* \rightarrow Q_{n+1}^*\) and \(P_n^* \rightarrow P_{n+1}^*\) have the same kernel. Since \(P_{n+1}\) is the directed union of its finitely generated standard summands, 
    \[
        \ker(P_n^* \rightarrow P_{n+1}^*) = \bigcap \ker(P_n^* \rightarrow Q_{n+1}^*),
    \]
    where the intersection is taken over all finitely generated standard summands \(Q_{n+1}\) of \(P_{n+1}\). The system of subspaces \(\ker(P_n^* \rightarrow Q_{n+1}^*)\) is closed under finite intersections, and since they are all finite-dimensional (as \(\Dk{G}\)-modules), it follows that there is a finitely generated standard summand \(Q_{n+1} \leqslant P_{n+1}\) such that 
    \[
        \ker(P_n^* \rightarrow P_{n+1}^*) = \ker(P_n^* \rightarrow Q_{n+1}^*),
    \]
    as desired. It follows that the cohomology of \(Q_{n+1} \rightarrow P_n \rightarrow Q_{n-1}\) in degree \(n\) with coefficients in \(\Dk{G}\) coincides with that of \(P_{n+1} \rightarrow P_n \rightarrow P_{n-1}\) (i.e.~it vanishes).

    Since \(Q_{n\pm1}^*\) and \(P_n^*\) are finitely generated \(\Dk{G}\)-modules, the coboundary maps
    \[
        \delta^n \colon Q_{n-1}^* \rightarrow P_n^* \quad \text{and} \quad \delta^{n+1} \colon P_n^* \rightarrow Q_{n+1}^*
    \]
    can be identified with finite matrices over \(\Dk{G}\). In fact, their entries naturally lie in \(k[G]\), since the maps are induced by boundary maps between free \(k[G]\)-modules. Because \(\Dk{G}\) is a division ring, there are invertible matrices \(M_i\) over \(\Dk{G}\) such that \(M_{i+1} \delta^{i+1} M_i\inv\) is in Smith normal form for \(i = n-1\) and \(i = n\).

    For every normal subgroup \(H \leqslant G\) of finite-index, \(Q_{n+1} \rightarrow P_n \rightarrow Q_{n-1}\) can also be viewed as a chain complex of finitely generated free \(k[H]\)-modules. Moreover, \(\Dk{G}\) is isomorphic to the crossed product \(\Dk{H} * (G/H)\) (see \cite[Proposition 2.2]{JaikinZapirain2020THEUO}). Importantly, this means that $\Dk{G}$ is a $[G:H]$-dimensional vector space over $\Dk{H}$, so the matrices \(M_i\) and their inverses become matrices over \(\Dk{H}\). An \(m\)-by-\(l\) matrix becomes a \([G:H]m\)-by-\([G:H]l\) matrix and each matrix \(M_{i+1} \delta^{i+1} M_i\inv\) is still in Smith normal form after passing to the finite-index subgroup \(H\). Indeed, the entries equal to \(1\) get replaced by \([G:H]\)-by-\([G:H]\) identity matrices, and those equal to zero get replaced by \([G:H]\)-by-\([G:H]\) matrices of zeros.

    By \cite[Theorem 4.13]{KielakRFRS} (and the appendix to \cite{JaikinZapirain2020THEUO} for the positive characteristic case), there is a finite-index normal subgroup \(H \leqslant G\) and an antipodally symmetric open set \(U\) of $\H^1(H;\R)$ such that
    \begin{enumerate}
        \item $\H^1(G;\R) \subseteq \overline U$, and 
        \item for \(i = n-1\) and \(i = n\), the matrices \(M_i^{\pm}\) can be viewed as lying over the Novikov ring \(\nov{H}\) for every \(\chi \in U\).
    \end{enumerate}
    These matrices put the coboundary maps of
    \[\label{eq:novikov_complex}
        \Hom_{k[H]}(Q_{n+1},\nov{H}) \leftarrow \Hom_{k[H]}(P_n,\nov{H}) \leftarrow \Hom_{k[H]}(Q_{n-1},\nov{H}) \tag{\(\dagger\)}
    \]
    into the same Smith normal form as those of \(Q_{n+1}^* \leftarrow P_n^* \leftarrow Q_{n-1}^*\) (viewed as a chain complex of free \(\Dk{H}\)-modules). But then \eqref{eq:novikov_complex} has vanishing homology in degree \(n\). To conclude, observe that the images of \(\Hom_{k[H]}(Q_{n-1},\nov{H})\) and \(\Hom_{k[H]}(P_{n-1},\nov{H})\) in \(\Hom_{k[H]}(P_n,\nov{H})\) are equal and that
    \[
        \ker\left(\Hom_{k[H]}(P_n,\nov{H}) \rightarrow \Hom_{k[H]}(P_{n+1},\nov{H})\right)
    \]
    is contained in
    \[
        \ker\left(\Hom_{k[H]}(P_n,\nov{H}) \rightarrow \Hom_{k[H]}(Q_{n+1},\nov{H})\right).
    \]
    Hence, the degree \(n\) homology of
    \[
        \Hom_{k[H]}(P_{n+1},\nov{H}) \leftarrow \Hom_{k[H]}(P_n,\nov{H}) \leftarrow \Hom_{k[H]}(P_{n-1},\nov{H})
    \]
    is a submodule of the degree \(n\) homology of \eqref{eq:novikov_complex}, and therefore it vanishes. \qedhere
\end{proof}

\begin{cor}\label{cor:l20_nov0}
    Let $G$ be a RFRS group such that the trivial \(k[G]\)-module \(k\) admits a projective resolution \(P_\bullet\) that is finitely generated in degree \(n\) for some \(n \in \N\). Then there is a finite-index subgroup $H \leqslant G$ and an antipodally symmetric open subset $U \subseteq \H^1(H;\R)$ such that
    \begin{enumerate}
        \item $\H^1(G; \R) \subseteq \overline{U}$ and 
        \item $\H^n(H; \nov{H}) = 0$ for each $\chi \in U$.
    \end{enumerate}
\end{cor}

We follow the proof of \cite[Theorem 3.3]{JaikinLinton_coherence} for the following lemma.

\begin{lem}\label{lem:weak_top_dim}
    Let $G$ be RFRS and suppose that $G$ fits into a short exact sequence
    \[
        1 \rightarrow N \rightarrow G \rightarrow \Z \rightarrow 1
    \]
    where $\cd_k(N) = n-1$. Then $\Dk{G}$ is of weak dimension at most $n-1$ as a $k[G]$-module. In particular, $\b{n}(G;k) = 0$.
\end{lem}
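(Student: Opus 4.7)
The plan is to realise $\Dk{G}$ as the Ore localisation of the crossed product $\Dk{N}*Q$, and then use flatness of the localisation to reduce the computation of $\Tor^{k[G]}_\bullet(\Dk{G}, -)$ to a computation of $\Tor^{k[N]}_\bullet(\Dk{N}, -)$. The first ingredient is that since $Q$ is torsion-free elementary amenable, $k[Q]$ is an Ore domain (Kropholler--Linnell--L\"uck); this lifts to show that the crossed product $\Dk{N}*Q$ is itself an Ore domain. Its Ore localisation is a division ring containing $k[G]$, and one verifies, using the Hughes-freeness of $k[N]\hookrightarrow \Dk{N}$ together with the structure of $Q$ (iterated extensions with torsion-free abelian quotients), that the induced embedding $k[G]\hookrightarrow \Ore(\Dk{N}*Q)$ is Hughes-free. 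By Hughes's uniqueness theorem this localisation is canonically identified with $\Dk{G}$, so in particular $\Dk{G}$ is flat over $\Dk{N}*Q$.

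For any left $k[G]$-module $M$, fix a projective resolution $P_\bullet\to M$ over $k[G]$. Since $k[G]$ is free as a $k[N]$-module on any transversal of $N$ in $G$, $P_\bullet$ is also a projective resolution of $M$ over $k[N]$. Associativity of tensor products, combined with the right-$k[G]$-module isomorphism $\Dk{N}*Q\cong \Dk{N}\otimes_{k[N]} k[G]$, yields
\[
\Dk{G}\otimes_{k[G]} P_\bullet \;\cong\; \Dk{G}\otimes_{\Dk{N}*Q}\bigl(\Dk{N}\otimes_{k[N]} P_\bullet\bigr).
\]
The flatness of $\Dk{G}$ over $\Dk{N}*Q$ lets the outer tensor product commute with homology, so
\[
\Tor^{k[G]}_i(\Dk{G}, M) \;\cong\; \Dk{G}\otimes_{\Dk{N}*Q} \Tor^{k[N]}_i(\Dk{N}, M).
\]
The assumption $\cd_k(N)=n-1$ forces the global dimension of $k[N]$ to be $n-1$, hence the right-hand side vanishes for every $i\geqslant n$ and every $M$. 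This is the desired weak-dimension bound.

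For the ``in particular'' conclusion, taking $M=k$ and using the standard identification $\Tor^{k[G]}_n(k,\Dk{G})\cong \Tor^{k[G]}_n(\Dk{G}, k)$ via the anti-involution $g\mapsto g\inv$ on $k[G]$ gives $\Htwo{n}(G;k)=0$, whence $\b{n}(G;k)=0$. The step I expect to require the most care is the identification $\Dk{G}\cong \Ore(\Dk{N}*Q)$ --- specifically, checking that the Ore localisation embedding $k[G]\hookrightarrow \Ore(\Dk{N}*Q)$ is Hughes-free. When $Q\cong\Z$ this is essentially the defining Hughes-free condition applied to the pair $(N,G)$; the general torsion-free elementary amenable case should reduce to the cyclic case by iterating over an elementary amenable filtration, and the resulting statement is standard in the literature on Hughes-free division rings (e.g.~in work of Jaikin-Zapirain), so I would cite it rather than reprove it.
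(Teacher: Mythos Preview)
Your proof is correct and follows essentially the same route as the paper: identify $\Dk{G}$ with $\Ore(\Dk{N}*Q)$, use flatness of Ore localisation, and reduce to $\Tor^{k[N]}_\bullet(\Dk{N},-)$ which vanishes above degree $n-1$ by the global-dimension bound. The paper compresses your explicit chain-level computation into a one-line appeal to Shapiro's lemma, and simply asserts the identification $\Dk{G}\cong\Ore(\Dk{N}*Q)$ where you (rightly) flag the Hughes-freeness check; one small correction is that the Ore-domain result for torsion-free elementary amenable crossed products is Kropholler--Linnell--Moody, not Kropholler--Linnell--L\"uck.
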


\begin{proof}
    Let $M$ be an arbitrary $k[G]$-module. Note that $\Dk{G} \cong \Ore(\Dk{N} * \Z)$, since twisted polynomials rings are Ore domains. Then
    \[
        \Tor_n^{k[G]}(\Dk{N} * \Z, M) \cong \Tor_n^{k[N]}(\Dk{N}, M) = 0
    \]
    by Shapiro's lemma and the fact that $k[N]$ is of global dimension $n-1$. Since Ore localisation is flat, we obtain $\Tor_n^{k[G]}(\Dk{G}, M) = 0$ as desired. \qedhere
\end{proof}

Fix a nontrivial character $\chi \colon G \rightarrow \R$. The next theorem gives a sufficient cohomological criterion for when $\cd_R(\ker \chi) < \cd_R(G)$ for an arbitrary ring \(R\). We thank Andrei Jaikin-Zapirain for communicating a simplification of our original proof of this theorem.

\begin{thm}\label{thm:cd_drop_nov}
    Let \(R\) be a ring and let $G$ be a group such that the trivial \(R[G]\)-module \(R\) admits a  projective resolution of length \(n\) that is finitely generated in degree \(n\). If $\chi \colon G \rightarrow \R$ is a nontrivial character such that
    \[
        \H^n(G; \widehat{R[G]}^{\pm\chi}) = 0,
    \]
    then $\cd_R(\ker \chi) < n$.
\end{thm}

\begin{proof}
    Let $N = \ker \chi$ and let $T$ be a transversal for $N$ in $G$. We first show that $\H^n(N;M) = 0$ for any $R[N]$-module $M$, and therefore that $\cd_R(N) < n$ by \cref{prop:hd_cd}. By Shapiro's lemma, we have
    \[
        \H^n(N;M) \cong \H^n(G; \Hom_{R[N]}(R[G], M)) \cong \H^n(G; \mathcal M)
    \]
    where $\mathcal M = \Hom_{R[N]}(R[G], M) \cong \prod_T M$ is a left $R[G]$-module: the elements of $N$ act factor wise via the action of $N$ on $M$ and the elements of $T$ permute the factors.
    
    Define 
    \[
        \mathcal M^\chi = \{ (x_t)_{t \in T} \in \mathcal M : \text{there exists} \ \alpha \in \R \ \text{such that} \ x_t = 0 \ \text{if} \ \chi(t) < \alpha \}
    \]
    and
    \[
        \mathcal M^{-\chi} = \{ (x_t)_{t \in T} \in \mathcal M : \text{there exists} \ \alpha \in \R \ \text{such that} \ x_t = 0 \ \text{if} \ \chi(t) > \alpha \}.
    \]
    Note that $\mathcal M^{\pm\chi}$ is a left $\widehat{R[G]}^{\pm\chi}$-module.

    We claim that the cohomology of $G$ with coefficients in $\mathcal M^{\pm\chi}$ vanishes. Let \(P_\bullet \rightarrow R\) be a projective resolution of length \(n\) such that \(P_n\) is finitely generated. Then
    \begin{align*}
        \Hom_{R[G]}(P_n,\widehat{R[G]}^\chi) \otimes_{\widehat{R[G]}^\chi} \mathcal M^\chi &\cong \Hom_{R[G]}(P_n,R[G]) \otimes_{R[G]} \widehat{R[G]}^\chi \otimes_{\widehat{R[G]}^\chi} \mathcal M^\chi \\
        &\cong \Hom_{R[G]}(P_n,R[G]) \otimes_{R[G]} \mathcal M^\chi \\
        &\cong \Hom_{R[G]}(P_n,\mathcal M^\chi),
    \end{align*}
    where the first and third isomorphisms follow from the  canonical isomorphism
    \[
        \Hom_S(P,S) \otimes_S L \cong \Hom_S(P,L), \quad f \otimes l \mapsto [x \mapsto f(x) \cdot l],
    \]
    which holds for any ring $S$, finitely generated projective left $S$-module $P$, and arbitrary left $S$-module $L$ (this fails if $P$ is not finitely generated).
    
    The commutative diagram
    \[
        \begin{tikzcd}
            {\Hom_{R[G]}(P_n,\widehat{R[G]}^\chi) \otimes_{\widehat{R[G]}^\chi} \mathcal M^\chi} \arrow[d, "\cong"'] \arrow[r] & {\H^n(G;\widehat{R[G]}^\chi) \otimes_{\widehat{R[G]}^\chi} \mathcal M^\chi} \arrow[d] \arrow[r] & 0 \\
            {\Hom_{R[G]}(P_n,\mathcal M^\chi)} \arrow[r]  & \H^n(G;\mathcal M^\chi) \arrow[r] & 0
        \end{tikzcd}
    \]
    has exact rows, and therefore the rightmost vertical map is surjective. By assumption, \(\H^n(G;\widehat{R[G]}^{\chi}) = 0\), so \(\H^n(G;\mathcal M^\chi) = 0\) as well. By a similar argument, \(\H^n(G;\mathcal M^{-\chi}) = 0\), as claimed.

    The $R[G]$-module inclusions of $\mathcal M^{\pm\chi} \hookrightarrow \mathcal M$ induce a surjection 
    \[
        \mathcal M^\chi \oplus \mathcal M^{-\chi} \rightarrow \mathcal M.
    \]
    Denote the kernel of this map by $\mathcal K$. Then we have a short exact sequence
    \[
        0 \rightarrow \mathcal K \rightarrow \mathcal M^\chi \oplus \mathcal M^{-\chi} \rightarrow \mathcal M \rightarrow 0
    \]
    of $R[G]$-modules. The long exact sequence in $\H^\bullet(G;-) = \Ext^\bullet_{R[G]}(R,-)$ contains the portion
    \[
        \H^n(G; \mathcal M^\chi \oplus \mathcal M^{-\chi}) \rightarrow \H^n(G; \mathcal M) \rightarrow \H^{n+1}(G; \mathcal K).
    \]
    But $\H^{n+1}(G; \mathcal K) = 0$ because $\cd_R(G) \leqslant n$ and 
    \[
        \H^n(G; \mathcal M^\chi \oplus \mathcal M^{-\chi}) \cong \H^n(G; \mathcal M^\chi) \oplus \H^n(G; \mathcal M^{-\chi}) = 0
    \]
    by the above. Hence, $\H^n(G; \mathcal M) = \H^n(N; M) = 0$. \qedhere
\end{proof}

We now arrive at our main result.

\begin{thm}\label{thm:weak_fibre}
    Let $G$ be a nontrivial finitely generated RFRS group such that there is a projective resolution
    \[
        0 \rightarrow P_n \rightarrow \cdots \rightarrow P_0 \rightarrow k \rightarrow 0
    \]
    of the trivial \(k[G]\)-module \(k\) and \(P_n\) is finitely generated. The following are equivalent:
    \begin{enumerate}
        \item\label{item:L2vanish} $\b{n}(G;k) = 0$;
        \item\label{item:weakFibre} there is a finite-index subgroup $H \leqslant G$ and an epimorphism $\chi \colon H \rightarrow \Z$ such that $\cd_k(\ker \chi) \leqslant n-1$;
        \item\label{item:weakFibre_hd_version} there is a finite-index subgroup $H \leqslant G$ and an epimorphism $\chi \colon H \rightarrow \Z$ such that $\hd_k(\ker \chi) \leqslant n-1$.
    \end{enumerate}
    If any of the above conditions hold and additionally there is an integer \(m\) such that \(G\) is of type \(\FP_m(k)\) and \(\b{i}(G;k) = 0\) for all \(i \leqslant m\), then there is a finite-index subgroup \(H \leqslant G\) and an epimorphism \(\chi \colon H \rightarrow \Z\) such that \(\ker \chi\) is of type \(\FP_m(k)\) and \(\cd_k(\ker \chi) \leqslant n-1\).
\end{thm}

If we additionally assume $\cd_k(G) = n$, then we would conclude that $\hd_k(G) = \cd_k(\ker \chi) = n-1$ by subadditivity of (co)homological dimension under group extensions. This gives the statement of \cref{thm:weakFibre}.

\begin{proof}[Proof (of \cref{thm:weak_fibre})]
    Suppose that \ref{item:L2vanish} holds. By \cref{cor:l20_nov0}, there is a finite-index subgroup $H \leqslant G$ and an open set $U \subseteq \H^1(H;\R)$ such that $\H^1(G; \R) \subseteq \overline U$ and
    \[
        \H^n(H; \widehat{k[H]}^{\pm\chi}) = 0
    \]
    for all characters $\chi \in U$. Fix a nontrivial character $\chi \in U$ such that $\chi(H) \subseteq \Z$ (there are many such characters since $U$ is open). Then \(\cd_k(\ker \chi) \leqslant n-1\) by \cref{thm:cd_drop_nov}.
    
    If \ref{item:weakFibre} holds, then it is clear that \ref{item:weakFibre_hd_version} holds because the inequality \(\hd_k(\Gamma) \leqslant \cd_k(\Gamma)\) holds for any group \(\Gamma\). If \ref{item:weakFibre_hd_version} holds, then $\b{n}(\ker \chi;k) = 0$ trivially, and hence $\b{n}(H;k) = 0$ by \cite[Theorem 6.4]{Fisher_Improved}. Thus \ref{item:L2vanish} holds by \cite[Lemma 6.3]{Fisher_Improved}.
    
    If, additionally, \(\b{i}(G;k) = 0\) for all \(i \leqslant m\) and \(G\) is of type \(\FP_m(k)\), then $G$ virtually fibres with kernel of type $\FP_m(k)$ by \cite[Theorem B]{Fisher_Improved}. Since the conclusion we want to prove is virtual, we will assume there is an epimorphism $G \rightarrow \Z$ with kernel of type $\FP_m(k)$. As mentioned above, there is a finite-index subgroup \(H \leqslant G\) and an antipodally symmetric open set \(U \subseteq \H^1(H;\R)\) whose closure contains every character on \(G\) and such that the top-dimensional Novikov cohomology vanishes for all characters in \(U\). Since fibring with kernel of type \(\FP_m(k)\) is an open condition (see \cite[Theorem A]{BieriRenzValutations}), there is then an epimorphism \(\chi \colon H \rightarrow \Z\) such that \(\ker \chi\) is of type \(\FP_m(k)\) and \(\H^n(H; \widehat{k[H]}^{\pm\chi})=0\). We conclude that \(\cd_k(\ker \chi) \leqslant n-1\) by \cref{thm:cd_drop_nov}. \qedhere
\end{proof}

\begin{rem}
    \begin{enumerate}
        \item Groups of type \(\FP(k)\) and of cohomological dimension at most \(n\) over \(k\) are examples of groups where \(k\) admits a projective resolution as in \cref{thm:weak_fibre}. Moreover, if \(G\) is RFRS of type \(\FP_{n-1}(k)\) with \(\cd_k(G) = n\) and \(\b{n}(G;k) = 0\), then \(G\) is of type \(\FP(k)\) by \cref{prop:hom_n_coherence}. This is why \cref{thm:main} holds for finitely generated RFRS groups without the type \(\FP_2(k)\) assumption.
        \item The assumption that the RFRS group \(G\) be finitely generated is only used in \cref{prop:L2cohomNov} when we apply \cite[Theorem 4.13]{KielakRFRS} to show that matrices over \(\Dk{G}\) can be represented over \(\widehat{k[H]}^\chi\) for some finite-index subgroup \(H \leqslant G\). It is actually enough to assume that \(G\) is countable and residually (poly-\(\Z\) and virtually Abelian) for this to hold (note that a finitely generated group is RFRS if and only if it is residually (poly-\(\Z\) and virtually Abelian) by \cite[Theorem 6.3]{OkunSchreve_DawidSimplified}). This is implicit in the work of Okun--Schreve \cite{OkunSchreve_DawidSimplified} and is made explicit in a forthcoming article of Kevin Klinge and the author, where the class of residually (poly-\(\Z\) and virtually nilpotent) groups is studied. For example, if \(G\) is a RAAG on a countably infinite graph \(\Gamma\) such that the flag completion of \(\Gamma\) is finite-dimensional and has finitely many top-dimensional simplices, then \(G\) satisfies all the hypotheses of \cref{thm:weak_fibre} except that it is not finitely generated. Moreover, it is easy to construct examples where the top-degree \(\ell^2\)-Betti number vanishes, and therefore such examples would admit virtual maps to \(\Z\) with kernels of strictly lower cohomological dimension.
    \end{enumerate}
\end{rem}

For RFRS groups \(G\) with \(\b{n}(G;k) = 0\) (where \(n = \cd_k(G)\)), we can now strengthen the homological \((n-1)\)-coherence of \(G\) over \(k\) to \((n-1)\)-coherence of the group algebra \(k[G]\). Note that $(n-1)$-coherence of $k[G]$ implies $(n-1)$-homological coherence of $G$ over $k$. Whether the converse holds is open, even for $n = 2$.

\begin{cor}\label{cor:group_alg_coherence}
    Let $G$ be a RFRS group of type $\FP_{n-1}(k)$ with $\cd_k(G) = n$. If $\b{n}(G;k) = 0$, then the group algebra $k[G]$ is $(n-1)$-coherent.
\end{cor}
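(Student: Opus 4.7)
The plan is to apply Proposition \ref{prop:ring_coherence} to $k[G]$. Since $\cd_k(G) = n$, the ring $k[G]$ has global dimension $n$, so it suffices to produce an embedding of $k[G]$ into a division ring of weak dimension at most $n-1$ as a $k[G]$-module. The natural source of such a bound is Lemma \ref{lem:weak_top_dim}, but the short exact sequence it requires is not available for $G$ itself and only emerges after passing to a finite-index subgroup.

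Concretely, I would begin by invoking Theorem \ref{thm:weak_fiber} with $m = 0$ to extract a finite-index subgroup $H \leqslant G$ and an epimorphism $\chi \colon H \twoheadrightarrow \Z$ whose kernel $N$ satisfies $\cd_k(N) = n - 1$. Feeding the short exact sequence $1 \to N \to H \to \Z \to 1$ (with $\Z$ torsion-free elementary amenable) into Lemma \ref{lem:weak_top_dim} then gives that $\Dk{H}$ has weak dimension at most $n - 1$ as a $k[H]$-module. Since $\cd_k(H) = n$ as well, the group ring $k[H]$ has global dimension $n$, and Proposition \ref{prop:ring_coherence} already yields that $k[H]$ is $(n-1)$-coherent.

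The final step is to descend $(n-1)$-coherence from $k[H]$ to $k[G]$, using that $k[G]$ is a free $k[H]$-module of finite rank $[G:H]$. Over such a finite extension, the property of being of type $\FP_m$ is unaffected by restriction of scalars, so a left ideal $I \subseteq k[G]$ of type $\FP_{n-1}$ over $k[G]$ is of type $\FP_{n-1}$ over $k[H]$. The standard equivalent characterisation of $(n-1)$-coherence as the statement that every module of type $\FP_{n-1}$ is of type $\FP_n$ then shows $I$ is of type $\FP_n$ over $k[H]$, hence of type $\FP_n$ over $k[G]$, as required.

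The main obstacle I anticipate is this descent step: it implicitly relies on the equivalence between the ideal-theoretic definition of $(n-1)$-coherence (as stated in the paper) and the module-theoretic reformulation, which is standard for $n = 1$ but requires slightly more care at higher $n$. An alternative route that bypasses this equivalence would be to show directly that $\Dk{G}$ itself has weak dimension at most $n-1$ as a $k[G]$-module (e.g.~by analysing how $\Dk{G}$ decomposes over $\Dk{H}$ under the finite-index inclusion) and then apply Proposition \ref{prop:ring_coherence} to $k[G]$ in one stroke; however, making this direct argument rigorous requires structural facts about Hughes-free division rings under finite-index extensions, so the descent route via $k[H]$ seems cleaner.
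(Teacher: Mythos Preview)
Your proposal is correct and follows essentially the same route as the paper: pass to a finite-index subgroup $H$ via \cref{thm:weak_fiber}, apply \cref{lem:weak_top_dim} and \cref{prop:ring_coherence} there, and then use that $(n-1)$-coherence of group algebras is a commensurability invariant. The paper simply asserts this last fact without further comment, whereas you sketch the descent argument; your concern about needing the module-theoretic reformulation of $(n-1)$-coherence is well-placed but not a genuine obstacle, as that equivalence is standard.
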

\begin{proof}
    By \cref{prop:hom_n_coherence}, \(G\) is of type \(\FP(k)\), and therefore \cref{thm:weak_fibre} implies there is a virtual map to \(\Z\) of cohomological dimension \(n-1\) over \(k\). Since group algebra coherence is a commensurability invariant, we may assume there exists a map $\chi \colon G \rightarrow \Z$ such that $\cd_k(\ker \chi) = n-1$. Then $\Dk{G}$ is of weak dimension at most $n-1$ by \cref{lem:weak_top_dim}, which implies that $k[G]$ is $(n-1)$-coherent by \cref{prop:ring_coherence}. \qedhere
\end{proof}

We now specialise to dimension $2$, where \cref{thm:weak_fibre} has the strongest consequences.

\begin{cor}\label{cor:2dim}
    Let $G$ be a finitely generated RFRS group with $\cd_k(G) \leqslant 2$. Then $G$ is virtually free-by-cyclic if and only if $\b{2}(G;k) = 0$.
\end{cor}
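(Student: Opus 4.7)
The plan is to apply \cref{thm:weak_fiber} at $n = 2$ and $m = 0$, and then invoke a Swan-type theorem to upgrade the kernel of the resulting virtual map to $\Z$ from ``cohomological dimension $1$'' to ``free''. For the forward direction, suppose $G$ is virtually free-by-cyclic and choose a finite-index subgroup $H \leqslant G$ of the form $H \cong F \rtimes \Z$ with $F$ free. When $F$ is nontrivial the short exact sequence $1 \to F \to H \to \Z \to 1$ satisfies the hypotheses of \cref{lem:weak_top_dim} (with $n = 2$, $N = F$, $Q = \Z$), giving $\b{2}(H;k) = 0$; when $F$ is trivial this is immediate from $\cd_k(H) \leqslant 1$. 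Multiplicativity of the $k$-$\ell^2$-Betti numbers under finite-index inclusions (\cite[Lemma 6.3]{Fisher_Improved}) then transfers the vanishing to $G$.

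For the backward direction, assume $\b{2}(G;k) = 0$. I would first dispose of the degenerate range: if $\cd_k(G) \leqslant 1$, then $G$ is a finitely generated locally indicable group of cohomological dimension at most $1$, so by a Swan-type theorem for locally indicable groups it is free, and any nontrivial finitely generated free group is visibly free-by-cyclic (quotient to $\Z$ on one generator; the kernel is free by Nielsen--Schreier). So assume $\cd_k(G) = 2$. Since $G$ is RFRS, hence torsion-free, and infinite (because $\cd_k(G) > 0$), one has $\b{0}(G;k) = 0$ automatically: the augmentation ideal surjects onto $\Dk{G}$, as $g - 1$ is a unit in $\Dk{G}$ whenever $g \neq 1$. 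Finite generation is exactly type $\FP_1(k) = \FP_{n-1}(k)$, so the hypotheses of \cref{thm:weak_fiber} are met at $n = 2$, $m = 0$; it produces a finite-index subgroup $H \leqslant G$ and an epimorphism $\chi \colon H \twoheadrightarrow \Z$ with $\cd_k(\ker \chi) = 1$. The kernel $\ker \chi$ is RFRS (as a subgroup of $G$), hence locally indicable, so a second application of the Swan-type theorem makes $\ker \chi$ free. Therefore $H \cong \ker \chi \rtimes \Z$ is free-by-cyclic and $G$ is virtually free-by-cyclic.

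The main technical ingredient, which I invoke twice, is that a locally indicable group of cohomological dimension $1$ over an arbitrary field $k$ is free. Over $\Z$ this is the classical theorem of Swan; over a general field the locally indicable assumption is necessary to rule out torsion phenomena invisible to $k$ (a $p$-torsion element would be undetected in characteristic $p$), and one must appeal to the appropriate generalisation---available for locally indicable groups via the Hughes-free division ring $\Dk{G}$. Beyond this input, the argument is essentially a bookkeeping exercise on top of \cref{thm:weak_fiber} and \cref{lem:weak_top_dim}.
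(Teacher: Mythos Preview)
Your proof is correct and follows the same route as the paper: apply \cref{thm:weak_fiber} at $n=2$ (and $m=0$) and then invoke a Swan-type theorem to turn a kernel of cohomological dimension $1$ into a free group. The paper's proof is a two-line sketch of exactly this; you have simply unpacked the degenerate case $\cd_k(G)\leqslant 1$, the verification that $\b{0}(G;k)=0$, and the forward direction (which the paper absorbs into the equivalence in \cref{thm:weak_fiber}).

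One small remark: you are more cautious than necessary about the Swan step. The paper cites Swan's theorem in the form ``torsion-free groups of $k$-cohomological dimension $1$ are free'' (valid over any field, via the Stallings--Swan--Dunwoody circle of ideas), so local indicability is not actually needed as an extra hypothesis---torsion-freeness, which RFRS already guarantees, suffices. Your argument is of course still correct, since locally indicable implies torsion-free.
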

\begin{proof}
    This follows immediately from \cref{thm:weak_fibre} applied at $n = 2$ and the Stallings--Swan theorem \cite{Stallings_cd1,Swan_cd1}, which states that torsion-free groups of cohomological dimension one over \(k\) are free. \qedhere
\end{proof}

\begin{cor}
    If $G$ is RFRS, $\cd_k(G) \leqslant 2$, and $\b{2}(G;k) = 0$, then $G$ and $k[G]$ are coherent. If $G$ is finitely generated, then $G$ has a finite-index subgroup $H \cong \pi_1(X)$ where $X$ is a $2$-complex with nonpositive immersions.
\end{cor}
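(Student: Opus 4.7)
The plan is to combine the results of this section with known properties of free-by-cyclic groups, specialising the main theorem to dimension two.

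For the coherence of $k[G]$, I would apply \cref{cor:group_alg_coherence} at $n = 2$, which directly gives that $k[G]$ is $1$-coherent, i.e., a coherent ring in the usual sense. For coherence of $G$ itself when $G$ is finitely generated, \cref{cor:2dim} gives that $G$ is virtually free-by-cyclic; since free-by-cyclic groups are coherent by Feighn--Handel (cited in the introduction) and coherence is a commensurability invariant, $G$ is coherent. For the possibly non-finitely-generated case, I would take an arbitrary finitely generated subgroup $H \leqslant G$ and reduce to the previous case: the RFRS property and the bound $\cd_k \leqslant 2$ are inherited by subgroups, and the vanishing $\b{2}(H;k) = 0$ would be extracted from the weak-dimension bound on $\Dk{G}$ produced by \cref{lem:weak_top_dim} (after virtually fibering $G$) via Hughes-free flatness of $\Dk{G}$ over $\Dk{H}$.

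For the final assertion, assume $G$ is finitely generated. \cref{cor:2dim} produces a finite-index subgroup $H \leqslant G$ of the form $F \rtimes \Z$ with $F$ free, and by Wise's theorem on free-by-cyclic groups (cited in the introduction) this $H$ is realised as $\pi_1(X)$ for an aspherical $2$-complex $X$ with nonpositive immersions.

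The step I expect to require the most care is the inheritance of $\b{2}(H;k) = 0$ to finitely generated subgroups in the non-finitely-generated case; everything else is a direct appeal to the results already established in this section, together with the cited facts about free-by-cyclic groups.
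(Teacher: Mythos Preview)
Your outline is correct for the parts where $G$ is finitely generated, and the final assertion is handled exactly as in the paper. The gap is in the non-finitely-generated case, which is precisely the part you flagged as delicate.

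Both of your moves there implicitly assume that $G$ itself is finitely generated. Invoking \cref{cor:group_alg_coherence} at $n=2$ requires $G$ to be of type $\FP_1(k)$, i.e.\ finitely generated, so it does not apply directly. Likewise, your plan to obtain $\b{2}(H;k)=0$ by ``virtually fibering $G$ and applying \cref{lem:weak_top_dim}'' presupposes \cref{thm:weak_fiber} (or \cref{cor:2dim}) for $G$, and those results again need $G$ of type $\FP_{n-1}(k)$. So you are trying to deduce a property of the finitely generated subgroups from a fibering of $G$ that you cannot produce.

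The paper circumvents this circularity by arguing entirely at the level of finitely generated subgroups. For any finitely generated $H\leqslant G$ one has $\b{2}(H;k)=0$ directly from $\b{2}(G;k)=0$ by an inheritance lemma (cited as \cite[Lemma 3.21]{FisherMorales_HNC}); no fibering of $G$ is needed. Then $H$ is virtually free-by-cyclic by \cref{cor:2dim}, giving coherence of $G$ via Feighn--Handel. For the ring, the paper does \emph{not} use \cref{cor:group_alg_coherence}: given a finitely generated left ideal $I\trianglelefteqslant k[G]$ with finite generating set $S$, it chooses a finitely generated $H\leqslant G$ with $S\subseteq k[H]$, uses that $k[H]$ is coherent (via the free-by-cyclic structure and \cite{HennekeLopez_PseudoSyl}) to get a finite presentation of $I_H$, and then tensors up along the free extension $k[H]\subseteq k[G]$, checking that $k[G]\otimes_{k[H]} I_H \cong I$. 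Your proposal is missing both the subgroup-inheritance input and this tensoring-up step.
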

\begin{proof}
    Let $H \leqslant G$ be a finitely generated subgroup. Then $\b{2}(H;k) = 0$ by \cite[Lemma 3.21]{FisherMorales_HNC}, so $H$ is virtually free-by-cyclic by \cref{cor:2dim} and hence finitely presented by \cite{FeighnHandel_FreeByZCoherent}.

    Let $I \trianglelefteqslant k[G]$ be a finitely generated (left) ideal with finite generating set $S \subseteq k[G]$. Note that there is a finitely generated subgroup $H \leqslant G$ such that $S \subseteq k[H]$. Then $H$ is virtually free-by-cyclic by the argument above. Let $I_H$ be the left ideal of $k[H]$ generated by $S$. It follows from \cite[Theorem B, Proposition 2.9]{HennekeLopez_PseudoSyl} that group algebras of free-by-cyclic groups are coherent, so there is an exact sequence
    \[
        k[H]^m \rightarrow k[H]^n \rightarrow I_H \rightarrow 0
    \]
    for some integers $m$ and $n$. Since $k[G]$ is free as a $k[H]$-module, it is also flat and thus tensoring with $k[G]$ gives the exact sequence
    \[
        k[G]^m \rightarrow k[G]^n \rightarrow k[G] \otimes_{k[H]} I_H \rightarrow 0.
    \]
    We verify that the multiplication map $m \colon k[G] \otimes_{k[H]} I_H \rightarrow I$ is an isomorphism, which will conclude the proof that $k[G]$ is coherent. The generating set $S$ is clearly in the image of $m$, so $m$ is surjective. Let $T$ be a left transversal for $H$ in $G$. Then 
    \[
        k[G] \otimes_{k[H]} I_H \cong \bigoplus_{t \in T} t \cdot k[H] \otimes_{k[H]} I_H \cong \bigoplus_{t \in T} t \cdot I_H \subseteq \bigoplus_{t \in T} t \cdot k[H] \cong k[G],
    \]
    and therefore $m$ is injective. 
    
    The final claim follows from \cref{cor:2dim} and \cite[Theorem 6.1]{Wise_JussieuCoherenceNPI}, which states that ascending HNN extensions of free groups are fundamental groups of $2$-complexes with nonpositive immersions.
    \qedhere
\end{proof}

\bibliography{bib}
\bibliographystyle{alpha}

\end{document}